\documentclass[11pt,english]{article}
\usepackage[T1]{fontenc}
\usepackage[latin9]{inputenc}
\usepackage{amsmath}
\usepackage{amsthm}
\usepackage{amssymb}

\makeatletter
\theoremstyle{plain}
\newtheorem{thm}{\protect\theoremname}[section]
\theoremstyle{plain}
\newtheorem{cor}[thm]{\protect\corollaryname}
\theoremstyle{plain}
\newtheorem{lem}[thm]{\protect\lemmaname}
\theoremstyle{definition}
\newtheorem{problem}[thm]{\protect\problemname}


\DeclareMathOperator{\End}{End}
\DeclareMathOperator{\Aut}{Aut}
\DeclareMathOperator{\supp}{supp}
\date{}

\makeatother

\usepackage{babel}
\providecommand{\corollaryname}{Corollary}
\providecommand{\lemmaname}{Lemma}
\providecommand{\problemname}{Problem}
\providecommand{\theoremname}{Theorem}

\begin{document}
\title{On the analytic self-maps of Cantor sets in the line}
\author{Michael Hochman\thanks{Supported by ISF grants 1702/17 and 3056/21}}
\maketitle
\begin{abstract}
We show that a topological Cantor set in $\mathbb{R}$ has at most
countably many real-analytic, onto self-maps.
\end{abstract}

\section{Introduction}

A Cantor set is a non-empty, compact, totally disconnected set with
no isolated points. If $X,Y\subseteq\mathbb{R}$ are Cantor sets and
$\alpha\geq0$, we define \footnote{We point out that analyticity implies that all extensions of a map
$f\in\End^{\omega}(X)$ to an interval containing $X$ are compatible
(in contrast, when $1\leq\alpha\leq\infty$, the extension is never
unique).}
\[
\End^{\alpha}(X,Y)\left\{ f:X\rightarrow Y\;\left|\negthickspace\begin{array}{c}
f(X)=Y\text{ and }f\text{ extends to a }C^{\alpha}\\
\text{ map of an open interval containing}X
\end{array}\right.\right\} 
\]
We abbreviate $\End^{\alpha}(X)=\End^{\alpha}(X,X)$ for the semigroup
of $C^{\alpha}$ onto self-maps.

In general, the size of $\End^{\omega}(X)$ is affected both by $\alpha$
and $X$. Every Cantor set $X\subseteq\mathbb{R}$ has uncountably
many $C^{0}$-self maps, but in the regime $0<\alpha\leq\infty$,
the set $\End^{\alpha}(X)$ may be empty, finite, countably infinite
or uncountable, depending on the structure of $X$. 

The main result of this note is the observation that the real-analytic
case $\alpha=\omega$ is much more constrained:
\begin{thm}
\label{thm:main}If $X,Y\subseteq\mathbb{R}$ is a Cantor set, then
$\End^{\omega}(X,Y)$ is at most countable.
\end{thm}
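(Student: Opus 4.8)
The plan is to realize $\End^{\omega}(X,Y)$ as a subset of a separable metric space and prove countability by excluding convergent sequences of distinct elements. Let $I=[\min X,\max X]$. Since $X$ is perfect, every point of $X$ is an accumulation point, so by the identity theorem two analytic functions on an interval containing $X$ that agree on $X$ agree identically; hence $f\mapsto f|_{X}$ is injective from $\End^{\omega}(X,Y)$ into $C(X)$, which is separable because $X$ is compact metric. In a separable (hence second countable) metric space the Cantor--Bendixson theorem writes any subset as a countable set together with its set of condensation points, which is perfect; so if $\End^{\omega}(X,Y)$ were uncountable it would contain some $f$ that is the limit, uniformly on $X$, of a sequence of \emph{distinct} $f_{n}\in\End^{\omega}(X,Y)$. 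The whole problem thus reduces to the rigidity statement that each $f$ is isolated: no such sequence exists.

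To exploit analyticity I would first pass to the analytic extensions. Stratify $\End^{\omega}(X,Y)$ by a neighborhood $U\supseteq I$ from a countable basis together with a uniform bound $M$ on a compact neighborhood of $I$ inside $U$; countably many strata exhaust the semigroup, so it suffices to treat a convergent sequence whose terms and limit lie in one stratum (arranging that the tail and the limit share one neighborhood is a technical point I would dispatch here). On such a stratum Montel's theorem upgrades uniform convergence on $X$ to local uniform convergence on $U$, and with it uniform convergence of all derivatives near $X$; in particular $f_{n}\to f$ and $f_{n}'\to f'$ uniformly on a neighborhood of $X$. Analyticity also supplies a crucial finiteness: $f'$ has finitely many zeros in $I$, so $f$ is piecewise strictly monotone, whence $f|_{X}$ is finite-to-one with multiplicity bounded by the number of monotone branches.

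The mechanism forcing rigidity is the total disconnectedness of $Y$. As a clean instance, for any continuous path $t\mapsto f_{t}$ in $\End^{\omega}(X,Y)$ and any $x\in X$ the curve $t\mapsto f_{t}(x)$ takes values in the totally disconnected set $Y$ and is therefore constant; hence $f_{t}|_{X}$ is constant in $t$ and, by the identity theorem, $f_{t}\equiv f_{0}$. So $\End^{\omega}(X,Y)$ has no nonconstant paths, the qualitative shadow of the isolation I want. To make this quantitative for a sequence I would use that $Y$ has, for each threshold, only finitely many gaps longer than it: finitely many large gaps cut $Y$ into clopen pieces separated by a definite distance $\rho>0$, and these large gaps are themselves separated and sit at fixed positions. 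Once $\|f_{n}-f\|_{\infty,X}<\rho$, the maps $f_{n}$ and $f$ pull back the clopen pieces of $Y$ to the \emph{same} clopen partition of $X$ and realize the same large gaps; combined with the finite branch structure, $f_{n}$ and $f$ eventually fix exactly the same large-gap endpoints of $Y$.

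The hard part will be upgrading this agreement of the \emph{coarse} combinatorics to genuine equality, i.e. rigidity at \emph{all} scales: the separation of a finer partition shrinks as one refines, so no soft bootstrap closes the gap, and for self-similar $Y$ one really does meet distinct gaps of nearly equal length sitting close together. This is exactly where analyticity must enter quantitatively rather than combinatorially. At a gap endpoint $p\in X$ fixed by all $f_{n}$ in the above sense and noncritical for $f$, I would compare the local multipliers: $f'(p)$ and $f_{n}'(p)$ govern the asymptotic ratios of the lengths of the gaps of $X$ accumulating at $p$ to the lengths of their images, so matching those image gaps inside $Y$ constrains the multipliers, and then $f_{n}'\to f'$ together with this constraint forces $f_{n}$ and $f$ to agree at a sequence of gap endpoints accumulating at $p$; the identity theorem yields $f_{n}=f$, contradicting $f_{n}\neq f$. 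I expect this all-scales rigidity at fixed noncritical endpoints, together with its bookkeeping across the finitely many monotone branches, to be the main obstacle of the proof.
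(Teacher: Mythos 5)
Your reduction framework is sound: restriction to $X$ is injective by the identity theorem, $C(X)$ is separable, Cantor--Bendixson gives a condensation point inside an uncountable stratum, and Montel upgrades uniform convergence on $X$ to convergence of $f_n$ and $f_n'$ near $X$; the coarse step (for $\|f_n-f\|_{\infty,X}$ below the shortest of the finitely many ``large'' gaps of $Y$, the maps $f_n$ and $f$ induce the same correspondence between the clopen pieces of $X$ and of $Y$) is also correct. But there is a genuine gap exactly where you flag one, and the multiplier heuristic you sketch cannot close it. Convergence $f_n'(p)\to f'(p)$ constrains the lengths and positions of the image gaps only up to an error tending to $0$, whereas what you need is \emph{exact} agreement $f_n(q)=f(q)$ at infinitely many gap endpoints $q$ before the identity theorem can be invoked. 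Approximate information can never single out which gap of $Y$ is the image of a given gap of $X$: at fine scales, $Y$ can (and for self-similar examples does) contain distinct gaps of exactly equal length lying arbitrarily close together, so the assignment of image gaps is not determined by lengths and approximate positions to any finite precision. In short, no amount of ``$f_n$ is close to $f$ with close derivatives'' rules out that $f_n$ shifts the gap combinatorics of $Y$ by a tiny amount at every sufficiently fine scale; some device must force the image of a specific gap to be a specific gap.

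That forcing device is what the paper supplies and your proposal lacks. After reductions of a different kind (a Baire-category argument producing a clopen piece on which $f$ is an analytic bijection with nonvanishing derivative, then composition tricks reducing to orientation-preserving maps $h$ with $h(x)>x$ that are concave near an attracting fixed endpoint $b$), the paper introduces \emph{leading gaps}: gaps $J$ such that every gap to the right of $J$ is strictly shorter. Concavity, via the distortion inequality (\ref{eq:interval-ratio}), implies that $h$ maps leading gaps to leading gaps, and the bound (\ref{eq:;leng-ratio-of-leader-to-gap}) shows the number $k(n)$ of leading gaps between consecutive iterates $h^n(J_1)$ is a bounded monotone integer sequence. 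Thus $h$ is pinned down exactly (not approximately) by the countable data $\bigl(J_1,(k(n))\bigr)$, which yields countability outright. Note also that your route commits you to proving a strictly stronger statement than the theorem --- that every $f$ is \emph{isolated} in the uniform topology --- which the paper neither proves nor needs, and whose truth is not clear even granting the paper's analysis: two maps can share the data $J_1,k(1),\dots,k(N)$ for large $N$ and hence be uniformly close while distinct. So even with the leading-gap machinery imported, your isolation statement would require an additional argument; the direct injection into countable data avoids this entirely.
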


This result seems to be unknown in the fractal geometry community,
but after this paper was completed we learned that the main step of
the argument, which deals with increasing maps of an interval, can
be derived from Proposition 3.5 of \cite{Navas2006}. Our proof, which
is based on different considerations, is more elementary and self-contained
(e.g. it does not rely on Szekeres's theorem).

It is crucial to the statement that we consider analytic maps defined
on intervals containing $X$, rather than, say, those defined on a
(possibly disconnected) open neighborhood, i.e. piecewise analytic
maps. For example, on the middle-$1/3$ Cantor set $C\subseteq[0,1]$,
define the map that takes $C\cap[0,1/3]$ onto $C$ and maps $C\cap[2/3,1]$
identically to a fixed $x_{0}\in C$. Such maps are analytic on a
neighborhoo of $C$ but there are uncountably many choices for $x_{0}$.
Nevertheless, under the assumption that the maps are open, or open
outside a finite set, one can deduce the following:
\begin{cor}
If $X,Y\subseteq\mathbb{R}$ are Cantor sets, then there are at most
countably many maps $f:X\rightarrow Y$ that are open on the complement
of some finite subset of $X$, and that extend analytically to a neighborhood
of $X$.
\end{cor}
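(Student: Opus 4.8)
The plan is to reduce the statement to countably many applications of Theorem~\ref{thm:main}, exploiting the fact that a Cantor set carries only a \emph{countable} algebra of clopen sets. I will use two standard facts: every nonempty clopen subset of a Cantor set is again a Cantor set (it is compact, totally disconnected, and perfect, the last because $X$ has no isolated points), and there are only countably many clopen subsets, hence only countably many finite clopen partitions. Granting these, the set
\[
\mathcal{D}=\bigcup_{P,R}\End^{\omega}(P,R),
\]
where $P$ ranges over the infinite clopen subsets of $X$ and $R$ over those of $Y$, is a countable union of countable sets by Theorem~\ref{thm:main}, hence countable; the whole argument will consist of squeezing each admissible $f$ into finitely many elements of $\mathcal{D}$.

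First I would localize the analytic data. Given $f\colon X\to Y$ open on $X\setminus F$ (with $F$ finite) and analytic on an open neighborhood $U\supseteq X$, I shrink $U$, using compactness of $X$, to a finite disjoint union $U=\bigsqcup_{j=1}^{N}I_{j}$ of open intervals on each of which $f$ is a single analytic function $\tilde f_{j}$. This yields a clopen partition $X=\bigsqcup_{j}X_{j}$ with $X_{j}=X\cap I_{j}$, where I may assume $\mathrm{conv}(X_{j})\subseteq I_{j}$. Each nonempty $X_{j}$ is automatically a Cantor set: a point of the open set $X_{j}$ that were isolated in $X_{j}$ would be isolated in $X$. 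The partition $\{X_{j}\}$ is one of only countably many.

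The decisive step, where the openness hypothesis is spent, is to pin $f$ down on each $X_{j}$ using a single sub-piece together with the identity theorem for real-analytic functions. Since $F$ is finite and $X_{j}$ is perfect, I choose an infinite clopen set $P_{j}\subseteq X_{j}\setminus F$ (a small clopen neighborhood of any point of $X_{j}\setminus F$). Because $P_{j}$ avoids $F$, the restriction $f|_{P_{j}}$ is an open map, so $f(P_{j})$ is open in $Y$; being compact it is clopen, and it is infinite since $f$ cannot be constant on the infinite set $P_{j}$ (that would force $\tilde f_{j}$ constant on $I_{j}$, contradicting openness). Thus $f|_{P_{j}}$ lies in $\End^{\omega}(P_{j},f(P_{j}))\subseteq\mathcal{D}$. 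Crucially, $P_{j}$ has an accumulation point inside $\mathrm{conv}(P_{j})\subseteq I_{j}$, so by analytic continuation $\tilde f_{j}$ is the \emph{unique} analytic function on $I_{j}$ with the prescribed values on $P_{j}$; hence $f|_{X_{j}}=\tilde f_{j}|_{X_{j}}$ is recovered from $f|_{P_{j}}$ alone.

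Finally I would assemble the count: the map
\[
f\longmapsto\Bigl(\{X_{j}\}_{j},\,\bigl(P_{j},f|_{P_{j}}\bigr)_{j}\Bigr)
\]
sends $f$ to a finite clopen partition of $X$ together with a finite tuple of elements of $\mathcal{D}$, and by the previous paragraph this data reconstructs $f$ (recover each $\tilde f_{j}$, restrict to $X_{j}$, and glue), so the map is injective. Its range sits inside the product of the countable set of finite clopen partitions with the countable set of finite tuples from $\mathcal{D}$, which is countable. The one genuine obstacle is the exceptional set $F$: on a piece meeting $F$ the map need not be open and its image need not be clopen, so one cannot place it directly in $\mathcal{D}$. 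The identity-theorem step is precisely what circumvents this, since knowing $f$ on a single infinite clopen sub-piece \emph{away} from $F$ already determines it on the entire analytic patch; this is also why the introduction's constant example escapes the conclusion, as there openness fails and no such sub-piece exists.
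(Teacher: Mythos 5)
Your proof is correct and takes essentially the same route as the paper's: localize $f$ to clopen Cantor pieces avoiding the finite exceptional set, note that each such restriction lies in some $\End^{\omega}(P,R)$ with $P,R$ clopen Cantor subsets (countable by Theorem \ref{thm:main}), and recover $f$ from this data via the identity theorem, counting with the countability of clopen subsets. You merely make explicit what the paper's terse proof leaves implicit, namely the reduction to finitely many analytic patches by compactness and the fact that openness of $f$ on $X\setminus F$ plus compactness forces $f(P_{j})$ to be clopen in $Y$.
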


\begin{proof}
If $F\subseteq X$ is finite and $f:U\rightarrow\mathbb{R}$ is an
open analytic map on a neighborhood $U$ of $X\setminus F$, then
in every connected component $U'\subseteq U$ we can find Cantor sets
$X',Y$ that are open and closed in $X,Y$, respectively, and such
that $f|_{X'}\in\End^{\omega}(X',Y')$, and $f|_{X'}$ determines
$f|_{U'}$ . The corollary follows since there are countably many
possibilities for each choice above.
\end{proof}

We emphasize again that we make no assumptions on $X$ beyond the
topological one. If one assumes that $X$ has more structure, such
results are well known and one can say a lot more. For example, let
$X$ be the central-$\alpha$ Cantor, constructed from $[0,1]$ by
removing the open middle segment of relative length $\alpha$ and
iterating the procedure. Then every affine map taking $X$ into itself
contracts by an integer power of $1-\alpha$, see \cite{ElekesKeletiMathe2010}.
Together with a linearization argument, this implies that if $f:X\rightarrow X$
is a local diffeomorphism, then the derivative $f'$ takes values
that are rational powers of $1-\alpha$, and in particular, if $f\in C^{\omega}$,
it is affine. A similar result holds if $X\subseteq[0,1]$ is a Cantor
set invariant and transitive under $\times a\bmod1$ for an integer
$a\geq2$, see \cite{Hochman2018-smooth-symmetries-of-xa-invariant-sets}.
More generally, Funar and Neretin showed that if $X$ is porous (``sparse''
in their terminology) then $\End^{\alpha}(X)$ is countable. But they
also showed that there are Cantor sets such that $\End^{\infty}(X)$
is uncountable \cite[Theorem 2,3 and Section 5.1]{FunarNeretin2018}
(note that Theorem 1 of \cite{FunarNeretin2018} concerns a completely
different space than Theorem \ref{thm:main} and does not imply it). 

\section{Proof }

In this section we prove Theorem (\ref{thm:main}). The proof consists
of a few reductions, followed by a combinatorial analysis of the action
of the map on the gaps in the Cantor set $X$.

\subsubsection*{Reduction to non-singular orientation-preserving bijections}

Fix Cantor sets $X,Y\subseteq\mathbb{R}$. We first use analyticity
and a topological argument to go from onto maps to invertible ones
(for open maps, this step is trivial).
\begin{lem}
\label{lem:Baire-argument}If $f\in\End^{\omega}(X,Y)$, then there
exist open intervals $I,J\neq0$ such that
\begin{itemize}
\item $I\cap X\neq\emptyset$ is open and closed in $X$,
\item $f$ extends to an analytic map $I\rightarrow J$.
\item $f'\neq0$ on $I$, and in particular, $f:I\rightarrow J$ is bijective.
\item $f(X\cap I)=Y\cap J$, and in particular, $f(X\cap I)$ is open and
closed in $Y$.
\end{itemize}
\end{lem}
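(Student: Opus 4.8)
The plan is to locate, among the finitely many monotone branches of the analytic extension of $f$, one branch that already surjects onto a clopen piece of $Y$, and then to cut out of that branch a genuine clopen piece of $X$.

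First I would fix the analytic extension $f\colon U\to\mathbb{R}$ on an open interval $U\supseteq X$ and pass to a compact subinterval $[c,d]\subseteq U$ with $X\subseteq[c,d]$. Since $[c,d]$ lies in a single connected component $U_{0}$ of $U$ and $f(X)=Y$ is a Cantor set (in particular not a single point), $f$ is non-constant on $U_{0}$, so the analytic function $f'$ is not identically zero on $U_{0}$; hence its zeros in $U_{0}$ are isolated and there are only finitely many in $[c,d]$, say $z_{1}<\dots<z_{k}$. These split $[c,d]$ into finitely many closed intervals $[z_{i},z_{i+1}]$ on each of which $f$ is strictly monotone, and on the open interval $V_{i}=(z_{i},z_{i+1})$ we have $f'\neq0$, so $f\colon V_{i}\to W_{i}:=f(V_{i})$ is an analytic diffeomorphism onto an open interval.

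Next comes the key step, a Baire category argument on the \emph{target}. Writing $Y=f(X)=\bigcup_{i}f(X\cap[z_{i},z_{i+1}])$, a finite union of compact sets, and using that the Cantor set $Y$ is a Baire space, some $f(X\cap[z_{i},z_{i+1}])$ has nonempty interior in $Y$. Discarding the at most two points $f(z_{i}),f(z_{i+1})$ (harmless since $Y$ is perfect) and shrinking, I would obtain a nonempty open interval $J$ with $\emptyset\neq Y\cap J\subseteq f(X\cap V_{i})$. This inclusion is exactly the surjectivity I expect to be the \emph{main obstacle}: an arbitrary small interval around a noncritical point of $X$ need only satisfy $f(X\cap I)\subseteq Y\cap J$, with no reason for equality, because a point of $Y$ near a chosen value could be hit only by preimages lying in other branches or elsewhere in $X$; the Baire step is precisely what confines a whole clopen chunk of $Y$ to the single injective branch $V_{i}$.

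Finally I would transfer this back through the diffeomorphism. Pick $x_{0}\in X\cap V_{i}$ with $f(x_{0})\in Y\cap J$, and since $x_{0}$ is a non-isolated point of the nowhere-dense set $X$, choose $a<x_{0}<b$ with $a,b\notin X$, $I:=(a,b)\subseteq V_{i}$, and $f(I)\subseteq J$. Set $J':=f(I)$, an open interval. Then $f'\neq0$ on $I$ and $f\colon I\to J'$ is a bijection. The inclusion $f(X\cap I)\subseteq f(X)\cap f(I)=Y\cap J'$ is immediate; conversely, given $y\in Y\cap J'\subseteq Y\cap J\subseteq f(X\cap V_{i})$ there is $x\in X\cap V_{i}$ with $f(x)=y$, and since $y\in f(I)$ and $f|_{V_{i}}$ is injective, $x\in I$, so $y\in f(X\cap I)$. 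Hence $f(X\cap I)=Y\cap J'$. Because $a,b\notin X$, the set $X\cap I$ is clopen in $X$; and $Y\cap J'$ is open in $Y$ (as $J'$ is open) while also compact, being the image of the compact set $X\cap I$, hence closed, so it is clopen in $Y$. This yields all four assertions of the lemma.
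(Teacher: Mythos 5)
Your proof is correct and follows essentially the same route as the paper: split the extension of $f$ at its finitely many critical points into monotone branches, apply Baire category to the compact images $f(X\cap[z_i,z_{i+1}])$ covering $Y$, and cut out a clopen piece carried bijectively by one branch. The only (immaterial) difference is at the end, where you shrink $I$ around a point $x_0$ with endpoints off $X$ and push forward to get $J$, whereas the paper chooses $J$ with endpoints off $Y$ and pulls back $I=f^{-1}(J)$.
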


\begin{proof}
By definition we can assume that $f$ is analytic in an open neighborhood
$U$ of $X$. We can find a finite or countable cover of $U$ by closed
intervals $\{I_{i}\}$ on which $f$ is strictly monotone. Then $f(I_{i}\cap X)$
is a cover of $X$ by closed sets, so, by Baire's theorem, one of
them, say $f(I_{i}\cap X)$, must have non-empty interior in $X$.
Let $J\subseteq f(I_{i})$ be an open interval such that
\begin{enumerate}
\item $J\cap X\neq\emptyset$
\item The endpoints of $J$ are in the complement sof $X$.
\item $J$ is disjoint from the images of point where $f'$ vanishes (there
are only finitely many such point in $I$, for otherwise $f$ would
be constant).
\end{enumerate}
Now $J$ and $I=f^{-1}J$ are the desired intervals.
\end{proof}
Let $I,J$ be intervals as in the lemma. Observe that knowing $f|_{I\cap X}$
determines $f$ uniquely on $I$, because $f$ is analytic and $I\cap X$
is infinite. Thus, it suffices for us to show that for every pair
of closed and open sets $Y,Z\subseteq X$ and every pair of open intervals
$I,J$ containing them, respectively, there are only countably many
real-analytic maps $I\rightarrow J$ such that $f(I\cap X)=J\cap X$
and $f'\neq0$ on $I\cap X$.

Moreover, given such $Y,Z$ and $f$, by considering $h=g^{-1}\circ f$
as $g$ ranges over all others such maps, we find that it suffices
to prove the statement in the last paragraph in the case $Y=Z$ and
$I=J$.

We can reduce further to the case that $f$ is orientation preserving
($f'>0$). Indeed, if $h:I\rightarrow I$ were one such orientation-reversing
map, then we could again consider the family $g\circ h$ as $g$ ranges
over all maps with the same property; this family is in bijection
with the orientation-reversing maps but consists of orientation-preserving
maps, so if the latter is countable, so is the former.

\subsubsection*{Reductions to concave maps with attracting fixed points}

Fix $Y,I,h$ be as above, with $h$ orientation-preserving ($h'>0$
on $I$). Observe that $h$ has fixed points in $I$; indeed, the
maximal and minimal points in $Y\cap I$ are fixed by $h$, since
$h$ is monotone and maps $Y\cap I$ onto itself. Furthermore, it
has finitely many fixed points, since the only analytic map fixing
infinitely many points in an interval is the identity map, and we
may exclude this specific case.

Replace $I=[a.b]$ by a minimal non-trivial closed sub-interval of
$I$ that intersects $Y$ in a Cantor set and whose endpoints are
fixed points of $h$. We can achieve this by dissection: if $p\in I$
is an interior fixed point then one of the sub-intervals $[a,p].[p,b]$
is a Cantor set. Repeat until a minimal interval is reached.

Having re-defined $I=[a,b]$ as above, note that either $h(x)>x$
in the interior of $I$, or $h(x)<x$ in the interior of $I$.

We may also assume $b$ to be an attracting fixed-point of $h$, i.e.
that we are in the case $h(x)>x$ in the interior of $I$. Indeed,
$b$ is a fixed point, and if we were in the case $h(x)<x$ we can
replace $h$ by $h^{-1}$.

Finally, by analyticity again, there is a left-neighborhood $(b-\delta,b]$
of $b$ in which $h$ is concave, or one where $h$ is convex. We
can assume We first discuss the concave case, and return to the convex
case at the end of proof.

We have arrived in the following situation. We have an interval $U=[b-\delta,b]$
with $b\in X$ such that $U\cap X$ is oen and closed in $X$, and
hence a Cantor set. Denote by $\mathcal{F}_{U}\subseteq\End^{\omega}(X)$
the set of real-analytic monotone orientation-preserving concave maps
$h$ of $I$ that fix $b$, map $U$ into itself, map $X\cap U$ onto
$X\cap f(U)$, satisfy $h(x)>x$ in the interior of $U$, and $f'(b)>0$.

Our goal is to show $\mathcal{F}_{U}$ is countable. 

\subsubsection*{The action of $h$ on intervals and distortion bounds}

We need two elementary properties of the action of $h$ on intervals.
Let $I\subseteq U$ be an interval and let $|I|$ denote its length.
We have a trivial bound,
\begin{equation}
\inf_{x\in U}|h'(x)|\leq\frac{|h(I)|}{|I|}\leq\sup_{x\in U}|h'(x)|\label{eq:lengths-ratios-converge}
\end{equation}
Since $h$ is concave on $U$, the left hand side is equal to $C=h'(b)>0$.
We shall assume that we have a lower bound on $C$. More precisely,
we shall show that for each $M\in\mathbb{N}$, there are only countably
many relevant $h$ for which $C\geq1/M$.

We partially order intervals according to whether one is entirely
to one side of the other in the line: If $I_{1},I_{2}\subseteq U$
are intervals, write $I_{1}\leq I_{2}$ provided that $x\leq y$ for
every $x\in I_{1}$, $y\in I_{2}$. 

Concavity then implies that 
\begin{equation}
I_{1}\leq I_{2}\quad\implies\quad\frac{|h(I_{2})|}{|h(I_{1})|}\leq\frac{|I_{2}|}{|I_{1}|}\label{eq:interval-ratio}
\end{equation}

\subsubsection*{Leading gaps, and the action of $h$ on them}

By a \textbf{gap} we shall mean a maximal open interval in $U\setminus X$.

Since $X$ is a Cantor set and intersects the interior of $U$ non-trivially,
the set of gaps is countably infinite, and the order type of the set
of gaps (in the partial order defined above) is dense.

Since $h:U\rightarrow U$ is strictly increasing and maps $U\cap X$
onto itself, it induces an order-preserving injection of the set of
gaps. Note that the converse is also true: the induced map on gaps
determines the action of $h$ on the endpoints of each gap, and since
$h$ is real-analytic, this determines $h$. In fact, $h$ is uniquely
determined by its action on any infinite set of gaps.

It is not hard to see that there are uncountably many order-preserving
injections on the set of gaps. Our goal is to show that only countably
many can come from $C^{\omega}$-self maps of $X$. The proof is based
on identifying a distinguished sequence of gaps on which the action
of $h$ is highly constrained. 

To this end, we say that an interval $J\subseteq(b-\delta,b)$ is
a \textbf{leading gap }if
\begin{enumerate}
\item [(I)]$J$ is a gap,
\item [(II)]Every gap that lies to the right of $J$ is strictly shorter
than $J$.
\end{enumerate}
It follows from (\ref{eq:interval-ratio}) that, if $J$ is a leading
leading gap, then so is $h(J)$. 

Fix a leading gap $J_{1}\subseteq(b-\delta,b)$ and write $J_{n}=h^{n}(J_{1})$.
Then all $J_{n}$ are leading gaps. Furthermore, the sequence $h^{n}(b-\delta)$
of left endpoints of $J_{n}$ is strictly increasing because $h(x)>x$,
and different gaps are disjoint; so it follows that $J_{n}\leq J_{n+1}$.

Let $J'_{n}$ denote the closed interval sandwiched between $J_{n}$
and $J_{n+1}$ (this is not a gap!). Evidently, 
\[
J'_{n+1}=h(J'_{n})
\]
If $J'_{n}=\{x\}$ then $x$ is isolated in $X$, contrary to assumption
that $X$ is a Cantor set, so $|J'_{n}|>0$ for all $n$. 

Writing $C>0$ for the constant on the left hand side of (\ref{eq:lengths-ratios-converge}),
and using (\ref{eq:interval-ratio}), we conclude that
\begin{equation}
\frac{|J'_{n}|}{|J_{n+1}|}\leq C\cdot\frac{|J'_{n}|}{|J_{n}|}=C\cdot\frac{|h^{n}(J'_{1})|}{|h^{n}(J_{1})|}\leq C\cdot\frac{|J'_{1}|}{|J_{1}|}\label{eq:;leng-ratio-of-leader-to-gap}
\end{equation}
Note that the right hand side is independent of $n$.

Although leading gaps are mapped to leading gaps, the sequence $\{J_{n}\}$
need not contain all leading gaps that lie on the right of $J_{1}$,
as some leading gaps may be contained in the $J'_{n}$s. However,
every leading gap in $J'_{n}$ is mapped by $h$ to a leading gap
in $J'_{n+1}$, so if we denote by $k(n)$ the number of leading gaps
in $J'_{n}$, then $\{k(n)\}$ is a non-decreasing sequence. 

We claim that $k(n)$ is also a bounded sequence, because every leading
gap in $J'_{n}$ is at least as long as $J_{n+1}$ (by the leading
property), and so $k(n)\leq|J'_{n}|/|J_{n+1}|=O(1)$ by (\ref{eq:;leng-ratio-of-leader-to-gap}). 

To conclude the argument, observe that $J_{1}=(b-\delta,b)$ and $(k(n))_{n=1}^{\infty}$
determine the entire sequence $(J_{n})$, and this determines an infinite
orbit of $h$, for example $h^{n}(b-\delta)$. Thus if $J_{1},(k(n))$
are known then $h$ is known on an infinite sequence and is thus uniquely
determined.. There are only countably many leading gaps $J_{1}$ and
countably many non-decreasing bounded integer sequences, so we have
shown that $\mathcal{F}_{U}$ is countable.

\subsubsection*{The convex case}

In the concave case the proof proceeds along similar lines. We have
an interval $U=[b-\delta,b]$ with $b\in X$ such that $U\cap X$
is a Cantor set. Denote by $\mathcal{G}_{U}\subseteq\End^{\omega}(X)$
the set of real-analytic monotone orientation-preserving convex maps
$h$ of $I$ that fix $b$. map $U$ to itself, map $X\cap U$ onto
itself, satisfy $h(x)>x$ in the interior of $U$, and such that $f'(b)\neq0$. 

Fix an interval $U=[b-\delta,b]$, let $\mathcal{G}_{U}\subseteq\End^{\omega}(X)$
denote the set of real-analytic self-maps $h$ that map $U$ onto
itself, map $X\cap U$ onto itself, whose endpoints lie in $X$ and
are fixed by $h$, such that $h$ is increasing and convex, satisfies
$h(x)>x$ in the interior of $U$, and $h'(b)>0$ in the interior.
We must show that $\mathcal{G}_{U}$ is countable. 

Since $h^{-1}$ is concave, we know by the previous argument that
if $J\subseteq U$ is a leading gap and if $h^{-1}(J)\subseteq U$,
then $h^{-1}(J)$ is also a leading gap. But note that it could happen
that $h(J)$ is not.

Nevertheless, for each leading gap $J\subseteq U$ we can produce
a maximally long sequences $(h^{-n}(J))_{n=0}^{N}$ of leading gaps,
terminating when $h^{-N}(J)\not\subseteq h(U)$. The same analysis
as before shows that the ratios of lengths of $h^{-(n+1)}(J),h^{-1}(J)$
and of the interval between them are bounded. It follows that for
every pair $h^{-n}(J)$, $h^{-(n+1)}(J)$ of such a sequence, there
is at most a bounded number $M$ of leading gaps between them. It
also follows that the terminating gap $h^{-N}(J)$ belongs to a finite
set of at most $M$ leading gaps (perhaps the constant is a-priori
different, but we choose an $M$ that is good for both bounds). Finally,
by taking $J$ close enough to $b$, we can ensure that $N$ is arbitrarily
large.

It follows that we can choose a sequence of initial leading gaps $J$
for which the resulting sequences extend each other, so we can form
at least one infinite such sequence, which, by reversing the order,
has the form $(h^{n}(J))_{n=0}^{\infty}$. There are at most $M$
possibilities for $J$, independent of $h$. 

The argument is completed as before, the only different being that
$k(n)$ is a non-increasing sequence of natural numbers, rather than
a bounded non-decreasing one.

\subsubsection*{Remarks}

Our proof gives a little more than was stated: In the end of thee
argument It is clear that each map $h$ is characterized by a leading
gap and a tail of the sequence $k(n)$ that is constant. Then it is
easy to see that the set of $m$ that can be the tail value is a cyclic
sub-semigroup of $\mathbb{N}$, and consequently, the set of such
$h$ is cyclic.

Also note, that we used analyticity in two rather mild ~ways;

First, it guaranteed that each map is determined by its behavior on
sufficiently many (in fact, countably many) points. Some such assumption
is unavoidable. For example, one can construct a Cantos set $X$ and
disjoint intervals $I_{1}\leq I_{2}\leq\ldots$ such that $X=\bigcup(X\cap I_{n})\cup\{x_{0}\}$.
Assuming that $X\cap I_{n}$ has $C^{\infty}$ self-maps $f_{n}\neq g_{n}$
close enough to the identity map, we can form a self-map by choosing
$h_{n}\in\End^{\infty}(X\cap I_{n})$ close to the identity, and setting
$h|_{I_{n}}\in\{f_{n},g_{n}\}$ and $h(x_{0})=x_{0}$. Then $\End^{\infty}(X)$
contains a copy of the product $\prod\{f_{n},g_{n}\}$, so it is uncountable.

Second, we used analyticity to ensure that the maps are piecewise
monotone and piecewise concave/convex. Some condition like this is
also necessary, because Funar and Neretin have constructed an example
of a Cantor set $X\subseteq\mathbb{R}$ with uncountably many $C^{\infty}$
maps fixing a given point in $X$ \cite{FunarNeretin2018}. 

\section{Problems}

Our main theorem suggests many generalizations but we have not succeeded
in proving or disproving any of them.
\begin{problem}
What can be said about the set of into (rather than onto) self- maps
of a Cantor set $X\subseteq\mathbb{R}$?
\end{problem}

The following related question was raised in \cite{Hochman2017},
but remains open:
\begin{problem}
If a Cantor set $X\subseteq\mathbb{R}$ has dimension $<1$ can there
exist uncountably many affine self-maps of $X$ into itself?
\end{problem}

Another direction concerns the algebraic structure of $\End^{\omega}(X)$:
\begin{problem}
Let $\Aut^{\omega}(X)$ be the invertible elements of $\End^{\omega}(X)$.
What groups arise as {[}subgroups{]} if $\Aut^{\omega}(X)$? When
is $\Aut^{\omega}(X)$ it finitely generated?
\end{problem}

For certain structured sets this question was addressed by Funar and
Neretin \cite{FunarNeretin2018}.

It is also natural to consider higher dimensions. Certainly our proof
breaks down, as it relies very strongly on the order structure of
$\mathbb{R}$. One also must adjust the statement, since if $X$ is
contained in a lower-dimensional $C^{\alpha}$-manifold $M$, one
may be able to extend a self-map of $X$ in many ways in the transverse
direction to the manifold. 
\begin{problem}
Is the analogue of Theorem \ref{thm:main} true for $X\subseteq\mathbb{R}^{d}$
and real-analytic maps, provided $M\cap X$ has empty interior in
$X$ for every $C^{\omega}$-submanifold $M\subseteq\mathbb{R}^{d}$?
\end{problem}

In the complex case, the qualification is unnecessary:
\begin{problem}
Is the analogue of Theorem \ref{thm:main} true for Cantor sets $X\subseteq\mathbb{C}$
and analytic maps? 
\end{problem}

And finally, going back to the line, 
\begin{problem}
\label{prob:measures}If $\mu$ is a compactly supported, non-atomic
Borel probability measure on $\mathbb{R}$, is 
\[
\End^{\omega}(\mu)=\{f\in\End^{\omega}(\supp\mu)\mid\mu=\mu\circ f^{-1}\}
\]
countable? 

Of course, if the support is a Cantor set, this is answered by the
results of the present paper (because any map preserving $\mu$ must
preserve its support), but if the support an interval the answer is
far from clear. The problem may be viewed as a relative of Furstenberg's
celebrated $\times2,\times3$ problem, but the two are not directly
related since here we consider more general measures and maps but
ask for a weaker conclusion. We remark, that for $\times m$ invariant
measures of positive dimension, we resolved Problem \ref{prob:measures}
in \cite{Hochman2018-smooth-symmetries-of-xa-invariant-sets}. It
is possible that also the problem above becomes easier under the assumption
that $\dim\mu>0$ even this remains open.

\bibliographystyle{plain}
\bibliography{bib}

\bigskip
\bigskip
\footnotesize
\noindent{}\texttt{Department of Mathematics, The Hebrew University of Jerusalem, Jerusalem, Israel\\ email: michael.hochman@mail.huji.ac.il}
\end{problem}

\end{document}